\providecommand{\U}[1]{\protect\rule{.1in}{.1in}}
\newtheorem{theorem}{Theorem}
\newtheorem{lemma}[theorem]{Lemma}
\newtheorem{proposition}[theorem]{Proposition}
\newtheorem{remark}[theorem]{Remark}
\newenvironment{proof}[1][Proof]{\noindent\textbf{#1.} }{\ \rule{0.5em}{0.5em}}
\providecommand{\U}[1]{\protect\rule{.1in}{.1in}}
\begin{document}

\title{Solving an abstract nonlinear eigenvalue problem\\by the inverse iteration method}
\author{Grey Ercole\\{\small \textit{Departamento de Matem\'{a}tica - ICEx, Universidade Federal de
Minas Gerais,}}\\{\small \textit{Av. Ant\^{o}nio Carlos 6627, Caixa Postal 702, 30161-970, Belo
Horizonte, MG, Brazil. }}\\{\small \textit{Email: grey@mat.ufmg.br.}}}
\maketitle

\begin{abstract}
\noindent Let $\left(  X,\left\Vert \cdot\right\Vert _{X}\right)  $ and
$\left(  Y,\left\Vert \cdot\right\Vert _{Y}\right)  $ be Banach spaces over
$\mathbb{R},$ with $X$ uniformly convex and compactly embedded into $Y.$ The
inverse iteration method is applied to solve the abstract eigenvalue problem
$A(w)=\lambda\left\Vert w\right\Vert _{Y}^{p-q}B(w),$ where the maps
$A:X\rightarrow X^{\star}$ and $B:Y\rightarrow Y^{\star}$ are homogeneous of
degrees $p-1$ and $q-1,$ respectively.

\end{abstract}

\noindent\textbf{2010 AMS Classification.} 47J05; 47J25; 35P30.

\noindent\textbf{Keywords:} Eigenvalue problems, inverse iteration,
quasilinear elliptic equations.

\section{Introduction}

Many eigenvalue-type problems involving quasilinear elliptic equations are
formulated as a functional equation of the form%
\begin{equation}
A(w)=\lambda\left\Vert w\right\Vert _{Y}^{p-q}B(w), \label{abst}%
\end{equation}
where $\left(  X,\left\Vert \cdot\right\Vert _{X}\right)  $ and $\left(
Y,\left\Vert \cdot\right\Vert _{Y}\right)  $ are Banach spaces over
$\mathbb{R},$ $X$ compactly embedded into $Y,$ and the maps $A:X\rightarrow
X^{\star}$ and $B:Y\rightarrow Y^{\star}$ are homogeneous of degrees $p-1>0$
and $q-1>0,$ respectively, that is:

\begin{enumerate}
\item[(A1)] $A(tw)=\left\vert t\right\vert ^{p-2}tA(w)$ for all $t\in
\mathbb{R};$

\item[(B1)] $B(tw)=\left\vert t\right\vert ^{q-2}tB(w)$ for all $t\in
\mathbb{R}.$
\end{enumerate}

We say that the pair $\left(  \lambda,w\right)  \in\mathbb{R}\times
X\setminus\left\{  0\right\}  $ solves (\ref{abst}) if, and only if,
\begin{equation}
\left\langle A(w),v\right\rangle =\lambda\left\Vert w\right\Vert _{Y}%
^{p-q}\left\langle B(w),v\right\rangle ,\quad\forall\mathrm{\;}v\in X,
\label{absol}%
\end{equation}
where we are using the notation $\left\langle f,v\right\rangle \overset
{\mathrm{def}}{=}f(v).$

In this paper we apply the inverse iteration method to solve the abstract
equation (\ref{abst}) by assuming the following additional hypotheses on the
maps $A$ and $B:$

\begin{enumerate}
\item[(A2)] $\left\langle A(u),v\right\rangle \leq\left\Vert u\right\Vert
_{X}^{p-1}\left\Vert v\right\Vert _{X}$ for all $u,v\in X,$ with the equality
occurring if, and only if, either $u=0$ or $v=0$ or $u=tv,$ for some $t>0;$

\item[(B2)] $\left\langle B(u),v\right\rangle \leq\left\Vert u\right\Vert
_{Y}^{q-1}\left\Vert v\right\Vert _{Y}$ for all $u,v\in Y,$ with the equality
occurring whenever $u=tv,$ for some $t\geq0;$

\item[(AB)] for each $w\in Y\setminus\left\{  0\right\}  $ given, there exists
at least one $u\in X\setminus\left\{  0\right\}  $ such that%
\[
\left\langle A(u),v\right\rangle =\left\langle B(w),v\right\rangle
,\quad\forall\mathrm{\;}v\in X.
\]

\end{enumerate}

We observe from (\textrm{A1}) and (\textrm{B1}) that (\ref{abst}) is
homogeneous, that is: if $\left(  \lambda,w\right)  $ solves (\ref{abst}) the
same holds true for $\left(  \lambda,tw\right)  ,$ for all $t\not =0.$
Motivated by this intrinsic property of eigenvalue problems, we say that
$\lambda$ is an \textit{eigenvalue} of (\ref{abst}) and that $w$ is an
\textit{eigenvector} of (\ref{abst}) corresponding to $\lambda$ or, for
shortness, we simply say that $\left(  \lambda,w\right)  $ is an
\textit{eigenpair} of (\ref{abst}).

Hypotheses (\textrm{A2}) and (\textrm{B2}) imply, respectively, that
\begin{equation}
\left\langle A(w),w\right\rangle =\left\Vert w\right\Vert _{X}^{p}%
,\quad\forall\mathrm{\;}w\in X \label{A2a}%
\end{equation}
and%
\begin{equation}
\left\langle B(w),w\right\rangle =\left\Vert w\right\Vert _{Y}^{q}%
,\quad\forall\mathrm{\;}w\in Y. \label{B2a}%
\end{equation}
Thus, by choosing $v=w$ in (\ref{absol}), we see that
\[
\lambda=\frac{\left\Vert w\right\Vert _{X}^{p}}{\left\Vert w\right\Vert
_{Y}^{p}},
\]
which shows that the eigenvalues of (\ref{abst}) are nonnegative. Actually,
they are bounded from below by
\[
\mu:=\inf\left\{  \left\Vert w\right\Vert _{X}^{p}:w\in X\cap\mathbb{S}%
_{Y}\right\}  ,
\]
where $\mathbb{S}_{Y}:=\left\{  w\in Y:\left\Vert w\right\Vert _{Y}=1\right\}
$ is the unit sphere in $Y.$

We note that the compactness of the embedding $X\hookrightarrow Y$ (which we
are assuming in this paper) implies that $\mu$ is positive and reached in
$\mathbb{S}_{Y}.$ Moreover, assuming in addition the conditions (\textrm{A1}),
(\textrm{A2}), (\textrm{B1}), (\textrm{B2}) and (\textrm{AB}) we will show
(see Proposition \ref{mmu}) that $\mu$ is an eigenvalue and that its
corresponding eigenvectors are precisely the scalar multiple of those vectors
in $\mathbb{S}_{Y}$ at which $\mu$ is reached. Because of this, we refer to
$\mu$ as the \textit{first eigenvalue} of (\ref{abst}) and any of its
corresponding eigenvectors as a \textit{first eigenvector}.

As we will see, hypothesis (\textrm{AB}) allows us to construct, for each
$w_{0}\in\mathbb{S}_{Y},$ an inverse iteration sequence $\left\{  w_{0}%
,w_{1},w_{2},\ldots\right\}  \subset X\cap\mathbb{S}_{Y}$ satisfying%
\[
\left\langle A(w_{n+1}),v\right\rangle =\lambda_{n}\left\langle B(w_{n}%
),v\right\rangle ,\quad\forall\mathrm{\;}v\in X
\]
where $\lambda_{n}\geq\mu.$

Our main result in this paper is stated as follows.

\begin{theorem}
\label{maintheo}Assume that $X$ is uniformly convex and compactly embedded
into $Y,$ and that the maps $A:X\rightarrow X^{\star}$ and $B:Y\rightarrow
Y^{\star}$ are continuous and satisfy the hypotheses (\textrm{A1}),
(\textrm{A2}), (\textrm{B1}), (\textrm{B2}) and (\textrm{AB}). The sequences
$\left\{  \lambda_{n}\right\}  _{n\in\mathbb{N}}$ and $\left\{  \left\Vert
w_{n+1}\right\Vert _{X}^{p}\right\}  _{n\in\mathbb{N}}$ are nonincreasing and
converge to the same limit $\lambda,$ which is bounded from below by $\mu.$
Moreover, $\lambda$ is an eigenvalue of (\ref{abst}) and there exists a
subsequence $\left\{  n_{j}\right\}  _{j\in\mathbb{N}}$ such that both
$\left\{  w_{n_{j}}\right\}  _{j\in\mathbb{N}}$ and $\left\{  w_{n_{j}%
+1}\right\}  _{j\in\mathbb{N}}$converge in $X$ to the same vector $w\in
X\cap\mathbb{S}_{Y},$ which is an eigenvector corresponding to $\lambda.$
\end{theorem}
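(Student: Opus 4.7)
The plan is to first extract sandwich inequalities from the iteration equation to obtain monotonicity and a common limit for $\{\lambda_n\}$ and $\{\|w_{n+1}\|_X^p\}$, then extract weakly convergent subsequences via reflexivity and compactness, and finally upgrade weak to strong convergence in $X$ using an $(S_+)$-type argument built on the equality case of (\textrm{A2}) together with the Kadec--Klee property.

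Testing the iteration equation with $v=w_{n+1}$ and with $v=w_n$ (the latter valid for $n\geq 1$ since $w_n\in X$) and invoking (\ref{A2a}), (\ref{B2a}), (\textrm{A2}), and (\textrm{B2}) together with $w_n,w_{n+1}\in\mathbb{S}_Y$, I would derive
$$\|w_{n+1}\|_X^p \;\leq\; \lambda_n \;\leq\; \|w_{n+1}\|_X^{p-1}\|w_n\|_X.$$
This immediately gives $\|w_{n+1}\|_X\leq\|w_n\|_X$ for $n\geq 1$, and then $\lambda_{n+1}\leq\|w_{n+2}\|_X^{p-1}\|w_{n+1}\|_X\leq\|w_{n+1}\|_X^p\leq\lambda_n$. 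So both sequences are nonincreasing (with the case $n=0$ requiring only the first inequality); both are bounded below by $\mu$ since $w_{n+1}\in X\cap\mathbb{S}_Y$, and the sandwich forces them to share a common limit $\lambda\geq\mu$.

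Since $X$ is reflexive (uniform convexity) and $\{w_n\}_{n\geq 1}$ is bounded in $X$, I extract a common subsequence $\{n_j\}$ with $w_{n_j}\rightharpoonup w$ and $w_{n_j+1}\rightharpoonup \tilde w$ weakly in $X$, and (by the compact embedding) $w_{n_j}\to w$ and $w_{n_j+1}\to\tilde w$ strongly in $Y$, so $w,\tilde w\in\mathbb{S}_Y$. Passing to the limit in the identity $\|w_{n_j+1}\|_X^p=\lambda_{n_j}\langle B(w_{n_j}),w_{n_j+1}\rangle$, using continuity of $B$ and strong $Y$-convergence, yields $\langle B(w),\tilde w\rangle=1$. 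Writing
$$\langle A(w_{n_j+1}),w_{n_j+1}-\tilde w\rangle \;=\; \lambda_{n_j}\langle B(w_{n_j}),w_{n_j+1}-\tilde w\rangle \;\longrightarrow\; 0,$$
I obtain $\langle A(w_{n_j+1}),\tilde w\rangle\to\lambda$; by (\textrm{A2}) then $\lambda\leq\lambda^{(p-1)/p}\|\tilde w\|_X$, i.e.\ $\|\tilde w\|_X\geq\lambda^{1/p}$. Weak lower semicontinuity of the norm supplies the reverse inequality, so $\|w_{n_j+1}\|_X\to\|\tilde w\|_X$, and the Kadec--Klee property of uniformly convex $X$ upgrades $w_{n_j+1}\to\tilde w$ to strong convergence in $X$.

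With strong $X$-convergence in hand, continuity of $A$ allows passing to the limit in the iteration to obtain $\langle A(\tilde w),v\rangle=\lambda\langle B(w),v\rangle$ for all $v\in X$. Testing $v=w$ and applying (\textrm{A2}) and (\ref{B2a}) gives $\lambda=\langle A(\tilde w),w\rangle\leq\|\tilde w\|_X^{p-1}\|w\|_X=\lambda^{(p-1)/p}\|w\|_X$, hence $\|w\|_X\geq\lambda^{1/p}$; combined with weak lower semicontinuity this is equality, so Kadec--Klee yields $w_{n_j}\to w$ strongly in $X$ as well. The equality case of (\textrm{A2}) then forces $\tilde w=tw$ for some $t>0$, and $\|\tilde w\|_Y=\|w\|_Y=1$ gives $t=1$, so $\tilde w=w$. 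Thus $(\lambda,w)$ is an eigenpair of (\ref{abst}) with $w\in X\cap\mathbb{S}_Y$. The central difficulty is upgrading weak to strong $X$-convergence despite $A$ being only norm-continuous: this is surmounted by the $(S_+)$-type argument above, which crucially combines the strong $Y$-convergence afforded by the compact embedding, the identity $\langle B(w),\tilde w\rangle=1$, and the Kadec--Klee property of $X$.
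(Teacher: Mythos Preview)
Your proof is correct and follows essentially the same strategy as the paper: the sandwich inequalities $\|w_{n+1}\|_X^p\leq\lambda_n\leq\|w_{n+1}\|_X^{p-1}\|w_n\|_X$ for monotonicity, weak compactness plus norm identification plus the Kadec--Klee property for strong $X$-convergence, and the equality case of (\textrm{A2}) to conclude $\tilde w=w$. The only organizational difference is that you extract a common subsequence for $\{w_{n_j}\}$ and $\{w_{n_j+1}\}$ from the outset and first upgrade $w_{n_j+1}\to\tilde w$ (then $w_{n_j}\to w$), whereas the paper first shows $w_{n_j}\to w$, then passes to a further subsequence $w_{n_{j_k}+1}\to\tilde w$, and finally recovers convergence of the full $\{w_{n_j+1}\}$ via a unique-cluster-point argument; your ordering is slightly more economical but the substance is the same.
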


The proof of this result is presented in Section \ref{sec1} by combining two
lemmas. In Lemma \ref{HyndLind} we obtain, from the hypotheses (\textrm{A2})
and (\textrm{B2}), the monotonicity of the sequences $\left\{  \lambda
_{n}\right\}  _{n\in\mathbb{N}}$ and $\left\{  \left\Vert w_{n+1}\right\Vert
_{X}^{p}\right\}  _{n\in\mathbb{N}}$ as well as their convergence to
$\lambda.$

In Lemma \ref{main} we use the uniform convexity of $X$ and the compactness of
the embedding $X\hookrightarrow Y$ to guarantee the existence of a subsequence
$\left\{  w_{n_{j}}\right\}  _{j\in\mathbb{N}}$ converging in $X$ to a
function $w\in X\cap\mathbb{S}_{Y}.$ A delicate issue in the conclusion of the
lemma is to show that the subsequence $\left\{  w_{n_{j}+1}\right\}
_{j\in\mathbb{N}}$ also converges to $w$ in order to pass to the limit, as
$j\rightarrow\infty,$ in
\[
\left\langle A(w_{n_{j}+1}),v\right\rangle =\lambda_{n_{j}}\left\langle
B(w_{n_{j}}),v\right\rangle ,\quad\forall\mathrm{\;}v\in X.
\]
For this we use the hypothesis (\textrm{A2}), which plays the same structural
role that the H\"{o}lder's inequality plays in the quasilinear elliptic
problems (we recall that the equality in the H\"{o}lder's inequality implies
that the functions involved, raised to conjugate exponents, are proportional).

We conclude Section \ref{sec1} by remarking that when $\lambda$ is simple,
meaning that its corresponding eigenvectors are scalar multiple of each other,
then $w$ and $-w$ are the only cluster points of the sequence $\left\{
w_{n}\right\}  _{n\in\mathbb{N}}.$ Thus, in some concrete situations a
suitable choice of $w_{0}\in\mathbb{S}_{Y}$ guarantees that the vector $w$ is
the only cluster point of $\left\{  w_{n}\right\}  _{n\in\mathbb{N}}$ when
$\lambda$ is known to be simple. Thus, in such a situation one has
$w_{n}\rightarrow w.$

A prototype for (\ref{abst}) is the following Dirichlet problem in a bounded
domain $\Omega\ $of $\mathbb{R}^{N},$ $N\geq2:$%
\begin{equation}
\left\{
\begin{array}
[c]{lll}%
-\Delta_{p}u=\lambda\left\Vert u\right\Vert _{q}^{p-q}\left\vert u\right\vert
^{q-2}u & \mathrm{in} & \Omega\\
u=0 & \mathrm{on} & \partial\Omega,
\end{array}
\right.  \label{pq}%
\end{equation}
where $\Delta_{p}u:=\operatorname{div}\left(  \left\vert \nabla u\right\vert
^{p-2}\nabla u\right)  $ is the $p$-Laplacian operator,
\[
1\leq q<p^{\star}:=\left\{
\begin{array}
[c]{lll}%
\frac{Np}{N-p} & \mathrm{if} & 1<p<N\\
\infty & \mathrm{if} & p\geq N,
\end{array}
\right.
\]
and $\left\Vert \cdot\right\Vert _{r}$ denotes the norm of $L^{r}(\Omega)$ for
$1\leq r\leq\infty$ (we will use this notation from now on).

Indeed, the concept of (weak) solution for (\ref{pq}) takes the form of
(\ref{absol}) with%
\begin{equation}
\left\langle A(u),v\right\rangle :={\int_{\Omega}}\left\vert \nabla
u\right\vert ^{p-2}\nabla u\cdot\nabla v\mathrm{d}x\quad\mathrm{and}%
\quad\left\langle B(u),v\right\rangle :={\int_{\Omega}}\left\vert u\right\vert
^{q-2}uv\mathrm{d}x. \label{A=W}%
\end{equation}
In this setting, $Y$ is the Lebesgue space $L^{q}(\Omega)$ endowed with the
standard norm%
\[
\left\Vert u\right\Vert _{q}:=\left(  {\int_{\Omega}}\left\vert u\right\vert
^{q}\mathrm{d}x\right)  ^{\frac{1}{q}}%
\]
and $X$ is the Sobolev space
\[
W_{0}^{1,p}(\Omega):=\left\{  u\in L^{p}(\Omega):\nabla u\in L^{p}(\Omega
)^{N}\,\mathrm{and}\,u=0\,\mathrm{on}\,\partial\Omega\right\}  ,
\]
endowed with the norm%
\[
\left\Vert u\right\Vert _{W_{0}^{1,p}}:=\left\Vert \nabla u\right\Vert
_{p}=\left(  {\int_{\Omega}}\left\vert \nabla u\right\vert ^{p}\mathrm{d}%
x\right)  ^{\frac{1}{p}}%
\]
which makes $W_{0}^{1,p}(\Omega)$ an uniformly convex Banach space.

The hypotheses (\textrm{A1}) and (\textrm{B1}) can be easily checked for the
maps $A$ and $B$ defined in (\ref{A=W}), whereas (\textrm{A2}) and
(\textrm{B2}) are deduced from H\"{o}lder's inequality. The compactness of the
Sobolev embedding $W_{0}^{1,p}(\Omega)\hookrightarrow L^{q}(\Omega),$ for
$1\leq q<p^{\star},$ is a well-known fact as well as the continuity of the
functions $A:W_{0}^{1,p}(\Omega)\rightarrow W_{0}^{-1,p^{\prime}}(\Omega)$ and
$B:L^{q}(\Omega)\rightarrow L^{q^{\prime}}(\Omega).$ (It is usual to denote
the dual space of $W_{0}^{1,p}(\Omega)$ by $W_{0}^{-1,p^{\prime}}(\Omega),$
where $r^{\prime}:=\frac{r}{r-1}$ is the H\"{o}lder conjugate of $r>1,$ i. e.
$\frac{1}{r}+\frac{1}{r^{\prime}}=1.$)

Property (\textrm{AB}) also holds true since $A$ is surjective and $B(w)\in
W_{0}^{-1,p^{\prime}}(\Omega)$ for all $w\in L^{q}(\Omega).$ We refer the
reader to \cite{mawhin,Le} where all the properties are proved.

The following facts regarding the eigenvalue problem (\ref{pq}) are well-known
(see \cite{Peral, FL,IO,Otani1}): there exists a sequence of eigenvalues
tending to $\infty;$ every eigenfunction belongs to $L^{\infty}(\Omega)$ and
the first eigenfunctions do not change sign in $\Omega.$ (When dealing with
spaces of functions, the nomenclature "eigenfunction" seems to be more
appropriate than "eigenvector".)

The particular case $q=p,$
\begin{equation}
\left\{
\begin{array}
[c]{lll}%
-\Delta_{p}u=\lambda\left\vert u\right\vert ^{p-2}u & \mathrm{in} & \Omega\\
u=0 & \mathrm{on} & \partial\Omega,
\end{array}
\right.  \label{pp}%
\end{equation}
has been extensively studied over the last three decades. Its first eigenvalue%
\begin{equation}
\lambda_{p}:=\min\left\{  \left\Vert \nabla u\right\Vert _{p}^{p}:u\in
W_{0}^{1,p}(\Omega)\quad\mathrm{and}\quad\left\Vert u\right\Vert
_{p}=1\right\}  \label{lambp}%
\end{equation}
is isolated and simple. Moreover, the first eigenfunctions are the only
eigenfunctions that do not change sign in $\Omega.$ These and other properties
of (\ref{pp}) can be verified in \cite{Anane, Lindqvist, Otani2} and
references therein.

When $p\not =2$ the eigenvalue problem (\ref{pp}) is very difficult to be
solved analytically and even numerically, since it loses the linear character
of $p=2$ and acquires the singular or degenerate term $\left\vert \nabla
u\right\vert ^{p-2}.$ We remark that analytical expressions for the first
eigenvalue are not known in general, not even for simple domains, such as
squares, balls or triangles.

In \cite{JFA} the inverse iteration method was introduced to solve (\ref{pp})
in the particular domain: the unit ball $B_{1}:=\left\{  x\in\mathbb{R}%
^{N}:\left\vert x\right\vert =1\right\}  .$ Starting with $u_{0}\equiv1$ and
exploring the radial structure of the Dirichlet problem at each iteration
step, the authors proved that
\[
\lim_{n\rightarrow\infty}\left(  \frac{\left\Vert u_{n}\right\Vert _{\infty}%
}{\left\Vert u_{n+1}\right\Vert _{\infty}}\right)  ^{p-1}=\lambda_{p}%
\quad\mathrm{and}\quad\lim_{n\rightarrow\infty}\frac{u_{n}}{\left\Vert
u_{n}\right\Vert _{\infty}}=u_{p}\quad\mathrm{in}\quad C^{1}(\overline{B_{1}%
}),
\]
where $\left\Vert \cdot\right\Vert _{\infty}$ denotes the sup norm and $u_{p}$
denotes the positive first eigenfunction such that $\left\Vert u_{p}%
\right\Vert _{\infty}=1.$

They also conjectured that%
\begin{equation}
\lim_{n\rightarrow\infty}\left(  \frac{\left\Vert u_{n}\right\Vert _{p}%
}{\left\Vert u_{n+1}\right\Vert _{p}}\right)  ^{p-1}=\lambda_{p}
\label{conjec}%
\end{equation}
for a general bounded domain and presented some numerical experiments for the
unit square as motivation to their conjecture.

The approach used in \cite{JFA}, based on radial symmetry, was adapted in
\cite{JMAA} to obtain the pair $\left(  \lambda_{p},u_{p}\right)  $ for a
radially symmetric annulus.

Recently, in \cite{PAMS}, the authors considered, for a general bounded domain
$\Omega,$ the sequence of iterates
\[
\psi_{n}:=(\lambda_{p})^{\frac{n}{p-1}}u_{n}%
\]
where $u_{0}\in L^{p}(\Omega)$ is given and $-\Delta_{p}u_{n+1}=\left\vert
u_{n}\right\vert ^{p-2}u_{n}.$ By making use of the minimizing property
(\ref{lambp}) of $\lambda_{p}$, they proved the convergence, in $W_{0}%
^{1,p}(\Omega),$ of the sequence $\left(  \psi_{n}\right)  _{n\in\mathbb{N}}$
to a function $\psi.$ Then, under the assumption $\psi\not \equiv 0,$ they
concluded that $\psi$ is a first eigenfunction and proved the conjecture
(\ref{conjec}) posed in \cite{JFA}. They also showed that $\psi\not \equiv 0$
if either $u_{0}\geq ke_{p}$ for some positive constant $k$ or $u_{0}\geq0,$
$u_{0}\not \equiv 0$ and $\Omega$ is sufficiently smooth. (Here $e_{p}$
denotes the positive eigenfunction such that $\left\Vert e_{p}\right\Vert
_{p}=1.$) It is simple to check that $u_{0}\equiv1$ leads to $\psi
\not \equiv 0$.

We emphasize that the minimizing property (\ref{lambp}) of the first
eigenvalue $\lambda_{p}$ plays a decisive role in the approach of \cite{PAMS}
and makes it applicable only to this eigenvalue.

The literature on the eigenvalue problem (\ref{pq}) in the case $q\not =p$
(which is shorter than in the case $q=p$), shows that there are some
differences between the cases $1\leq q<p$ and $p<q<p^{\star}$ with respect to
the properties of the first eigenvalue%
\begin{equation}
\lambda_{q}:=\min\left\{  \left\Vert \nabla u\right\Vert _{p}^{p}:u\in
W_{0}^{1,p}(\Omega)\quad\mathrm{and}\quad\left\Vert u\right\Vert
_{q}=1\right\}  \label{lamdaq}%
\end{equation}
(see \cite{Peral, ADMA, FL,Kaw,Nazarov}).

In the cases $1\leq q<p$ and $q=p$ some properties of the first eigenvalue
problem are shared. For example, the first eigenvalue is simple and the first
eigenfunctions are the only that do not change sign in $\Omega.$ Because of
these properties, we can guarantee that our method is successful when it is
used for the purpose of achieving a first eigenpair. In fact, if $1\leq q\leq
p$ and $u_{0}\in L^{q}(\Omega)\setminus\left\{  0\right\}  $ is nonnegative,
then $\lambda_{n}\rightarrow\lambda_{q}$ and $w_{n}\rightarrow e_{q}$ where
$e_{q}$ is the positive $L^{q}$-normalized eigenfunction (it is not necessary
to pass to a subsequence).

When $p<q<p^{\star}$ and $\Omega$ is a general bounded domain the simplicity
of $\lambda_{q}$ is not guaranteed nor the exclusivity of the first
eigenfunctions with respect to have a definite sign. Thus, in this situation,
we cannot guarantee that the eigenvalue $\lambda,$ obtained when $u_{0}$ is
nonnegative, coincides with $\lambda_{q}.$ By the way, we think that our
method could be used to investigate, at least numerically, the existence of
positive eigenfunctions associated with $\lambda>\lambda_{q}$ for some domains.

Our first motivation, inspired by the papers \cite{JFA,PAMS}, was to apply the
inverse iteration method to (\ref{pq}). However, we realized that the
arguments we had developed to deal with this problem depend only on the
properties of the functions $A$ and $B$ defined in (\ref{A=W}) combined with
compactness. Thus, we arrived at the abstract eigenvalue problem (\ref{abst})
under the hypotheses (\textrm{A1}), (\textrm{A2}), (\textrm{B1}),
(\textrm{B2}) and (\textrm{AB}).

We would like to emphasize that our abstract approach covers a large range of
eigenvalue problems involving partial differential equations of quasilinear
elliptic type and serves as a theoretical basis for a numerical treatment of
them. For the sake of completeness, we present in Section \ref{sec2} two more
examples of such problems: the Dirichlet eigenproblem for the $s$-fractional
$p$-Laplacian and a Steklov-type eigenvalue problem for the $p$-Laplacian
involving a homogenous term of degree $q-1$ on the boundary.

Our results in this paper complement those of \cite{HL2}. In the first part of
that paper the authors extend their own results presented in \cite{PAMS} to an
abstract setting, aiming to approximate the least Rayleigh quotient
$\Phi(u)/\left\Vert u\right\Vert _{Y}^{p}$ where, according to our notation,
$\Phi:X\rightarrow\lbrack0,\infty]$ is a functional satisfying certain
properties (among them, strict convexity and positive homogeneity of degree
$p>1$) and $X:=\left\{  u\in Y:\Phi(u)<\infty\right\}  .$ The authors reduce
the problem of minimizing the Rayleigh quotient above to an equivalent
subdifferential equation involving the subdifferentials of both functionals
$\Phi$ and $\frac{1}{p}\left\Vert \cdot\right\Vert _{Y}^{p}.$ Then, they apply
an inverse iteration scheme to solve the subdifferential equation. Our
approach, however, embraces eigenvalue problems that are not necessarily
linked to least Rayleigh quotients. Moreover, it guarantees that the inverse
iteration sequence always produces an eigenvalue.

\section{The results of convergence\label{sec1}}

In this section we assume that $X$ is uniformly convex, compactly embedded
into $Y$ and that $A:X\rightarrow X^{\star}$ and $B:Y\rightarrow Y^{\star}$
are continuous maps satisfying the hypotheses (\textrm{A1}), (\textrm{A2}),
(\textrm{B1}), (\textrm{B2}) and (\textrm{AB}), stated in the Introduction.

We recall that
\begin{equation}
\mu:=\inf\left\{  \left\Vert w\right\Vert _{X}^{p}:w\in X\cap\mathbb{S}%
_{Y}\right\}  , \label{mu}%
\end{equation}
where $\mathbb{S}_{Y}:=\left\{  w\in Y:\left\Vert w\right\Vert _{Y}=1\right\}
.$

\begin{proposition}
\label{mmu}Let $\left\{  w_{n}\right\}  _{n\in\mathbb{N}}\subset
X\cap\mathbb{S}_{Y}$ be a minimizing sequence of (\ref{mu}), that is:
$\left\Vert w_{n}\right\Vert _{Y}=1$ and $\left\Vert w_{n}\right\Vert
_{X}\rightarrow\mu.$ There exist a subsequence $\left\{  w_{n_{j}}\right\}
_{j\in\mathbb{N}}$ converging weakly in $X$ to a vector $w\in X\cap
\mathbb{S}_{Y}$ which reaches $\mu$ (i.e. $\left\Vert w\right\Vert _{Y}=1$ and
$\left\Vert w\right\Vert _{X}=\mu$). Moreover, $\mu$ is an eigenvalue of
(\ref{abst}) and its corresponding eigenvectors are precisely the scalar
multiple of those vectors where $\mu$ is reached.
\end{proposition}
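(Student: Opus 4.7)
The plan has three stages. First, I would extract a minimizer of $\mu$ via weak compactness. Second, using hypothesis (AB) together with the equality characterization in (A2), I would show that this minimizer is an eigenvector with eigenvalue $\mu$. Third, the description of all eigenvectors corresponding to $\mu$ would follow from the previous stages combined with the homogeneity of the problem.

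For the first stage, observe that $\|w_{n}\|_{X} \to \mu^{1/p}$ makes $\{w_{n}\}$ bounded in $X$. Since $X$ is uniformly convex and hence reflexive, I would extract a subsequence with $w_{n_{j}} \rightharpoonup w$ weakly in $X$. Compactness of $X \hookrightarrow Y$ upgrades this to $w_{n_{j}} \to w$ strongly in $Y$, so $\|w\|_{Y} = 1$ and $w \in X \cap \mathbb{S}_{Y}$. Weak lower semicontinuity of the norm gives $\|w\|_{X}^{p} \leq \liminf_{j} \|w_{n_{j}}\|_{X}^{p} = \mu$, while $w \in \mathbb{S}_{Y}$ forces $\|w\|_{X}^{p} \geq \mu$, so $\|w\|_{X}^{p} = \mu$.

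For the second stage, I would apply hypothesis (AB) to $w$ to obtain $u \in X \setminus \{0\}$ with $\langle A(u), v \rangle = \langle B(w), v \rangle$ for every $v \in X$. Testing with $v = u$ and using (\ref{A2a}) on the left together with (B2) and $\|w\|_{Y} = 1$ on the right yields $\|u\|_{X}^{p} \leq \|u\|_{Y}$; on the other hand, the definition of $\mu$ applied to $u/\|u\|_{Y}$ gives $\|u\|_{X}^{p} \geq \mu \|u\|_{Y}^{p}$, which combined with the previous inequality forces $\|u\|_{Y} \leq \mu^{-1/(p-1)}$ and hence $\|u\|_{X}^{p} \leq \mu^{-1/(p-1)}$. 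Testing instead with $v = w$ and using (\ref{B2a}) together with (A2) gives $1 = \|w\|_{Y}^{q} = \langle A(u), w \rangle \leq \|u\|_{X}^{p-1} \mu^{1/p}$, which rearranges to $\|u\|_{X}^{p} \geq \mu^{-1/(p-1)}$. Equality must therefore hold throughout; in particular $\langle A(u), w \rangle = \|u\|_{X}^{p-1} \|w\|_{X}$, and the equality characterization in (A2) forces $u = tw$ for some $t > 0$. Substituting this into the (AB) identity and invoking homogeneity (A1) produces $t^{p-1} \langle A(w), v \rangle = \langle B(w), v \rangle$ for all $v \in X$, and testing once more with $v = w$ identifies $t^{1-p} = \mu$. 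Since $\|w\|_{Y}^{p-q} = 1$, this is exactly (\ref{absol}) with $\lambda = \mu$.

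For the third stage, if $(\mu, v)$ is any eigenpair, the identity $\mu = \|v\|_{X}^{p}/\|v\|_{Y}^{p}$ already noted in the Introduction shows that $v/\|v\|_{Y}$ lies in $\mathbb{S}_{Y}$ and reaches $\mu$. Conversely, any $w \in X \cap \mathbb{S}_{Y}$ reaching $\mu$ is an eigenvector by the second stage, and the homogeneity properties (A1), (B1) extend this to all its nonzero scalar multiples. I expect the main obstacle to be the equality analysis in the second stage: without the strict ``if and only if'' clause of (A2), there is no mechanism to promote the $u$ produced by (AB) from being merely a preimage of $B(w)$ under $A$ to being proportional to $w$, and it is precisely this proportionality that converts $w$ itself into an eigenvector of (\ref{abst}).
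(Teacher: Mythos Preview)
Your proposal is correct and follows essentially the same approach as the paper's proof: extract a minimizer by reflexivity and compactness, then apply (AB) to the minimizer, test the resulting identity with both $v=u$ and $v=w$, squeeze the inequalities into equalities, and invoke the equality clause of (A2) to force proportionality. The only cosmetic difference is that the paper first normalizes $u$ to $\widetilde{w}:=u/\|u\|_{Y}\in\mathbb{S}_{Y}$ (setting $\gamma:=\|u\|_{Y}^{1-p}$) and derives the chain $\mu\le\|\widetilde{w}\|_{X}^{p}\le\gamma\le\|\widetilde{w}\|_{X}^{p-1}\mu^{1/p}$, whereas you keep $u$ unnormalized and obtain the equivalent chain $\mu\|u\|_{Y}^{p}\le\|u\|_{X}^{p}\le\|u\|_{Y}$ together with $\|u\|_{X}^{p}\ge\mu^{-1/(p-1)}$; either route forces $\langle A(u),w\rangle=\|u\|_{X}^{p-1}\|w\|_{X}$ and hence $u=tw$.
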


\begin{proof}
Since $\left\{  w_{n}\right\}  _{n\in\mathbb{N}}$ is a bounded sequence in
$X,$ there exist a subsequence $\left\{  w_{n_{j}}\right\}  _{j\in\mathbb{N}}$
and a vector $w\in X$ such that $w_{n_{j}}\rightharpoonup w$ in $X$ (weak
convergence) and $w_{n_{j}}\rightarrow w$ in $Y$. Here we have used that $X$
is reflexive and compactly embedded into $Y.$ The convergence $w_{n_{j}%
}\rightarrow w$ in $Y$ implies that $\left\Vert w\right\Vert _{Y}%
=\lim_{j\rightarrow\infty}\left\Vert w_{n_{j}}\right\Vert _{Y}=1,$ whereas the
weak convergence $w_{n_{j}}\rightharpoonup w$ in $X$ yields%
\[
\left\Vert w\right\Vert _{X}\leq\lim_{j\rightarrow\infty}\left\Vert w_{n_{j}%
}\right\Vert _{X}=\mu^{1/p}.
\]
Thus, since $\mu\leq\left\Vert w\right\Vert _{X}^{p}$ we conclude that
$\mu=\left\Vert w\right\Vert _{X}^{p}.$

Now, let us prove that the pair $\left(  \mu,w\right)  $ solves (\ref{abst}).
According to (\textrm{AB}) there exists $u\in X\setminus\left\{  0\right\}  $
such that
\[
\left\langle A(u),v\right\rangle =\left\langle B(w),v\right\rangle
,\quad\forall\mathrm{\;}v\in X.
\]
In view of (\textrm{A1}) we can rewrite this equation as%
\begin{equation}
\left\langle A(\widetilde{w}),v\right\rangle =\gamma\left\langle
B(w),v\right\rangle ,\quad\forall\mathrm{\;}v\in X \label{1b}%
\end{equation}
where $\gamma:=\left\Vert u\right\Vert _{Y}^{1-p}$ and $\widetilde
{w}:=\left\Vert u\right\Vert _{Y}^{-1}u$ (so that $\widetilde{w}\in
X\cap\mathbb{S}_{Y}$). Taking $v=w$ in (\ref{1b}) we obtain, from (\ref{B2a})
and (\textrm{A2})%
\begin{equation}
\gamma=\gamma\left\Vert w\right\Vert _{Y}^{q}=\gamma\left\langle
B(w),w\right\rangle =\left\langle A(\widetilde{w}),w\right\rangle
\leq\left\Vert \widetilde{w}\right\Vert _{X}^{p-1}\left\Vert w\right\Vert
_{X}=\left\Vert \widetilde{w}\right\Vert _{X}^{p-1}\mu^{1/p} \label{1a}%
\end{equation}
and taking $v=\widetilde{w}$ in (\ref{1b}) we obtain, from (\ref{A2a}) and
(\textrm{B2})%
\[
\left\Vert \widetilde{w}\right\Vert _{X}^{p}=\left\langle A(\widetilde
{w}),\widetilde{w}\right\rangle =\gamma\left\langle B(w),\widetilde
{w}\right\rangle \leq\gamma\left\Vert w\right\Vert _{Y}^{q-1}\left\Vert
\widetilde{w}\right\Vert _{Y}=\gamma.
\]
It follows that%
\[
\mu\leq\left\Vert \widetilde{w}\right\Vert _{X}^{p}\leq\gamma\leq\left\Vert
\widetilde{w}\right\Vert _{X}^{p-1}\mu^{1/p},
\]
where the first inequality comes from the definition of $\mu.$ A simple
analysis of these latter inequalities shows that all of them are, in fact,
equalities. Thus,
\[
\mu=\left\Vert \widetilde{w}\right\Vert _{X}^{p}\quad\mathrm{and}\quad
\gamma=\mu.
\]
Hence, (\ref{1a}) implies that%
\[
\left\langle A(\widetilde{w}),w\right\rangle =\left\Vert \widetilde
{w}\right\Vert _{X}^{p-1}\left\Vert w\right\Vert _{X}%
\]
and then (\textrm{A2}) leads us to conclude that $\widetilde{w}=w$ (note that
$\left\Vert \widetilde{w}\right\Vert _{X}=\left\Vert w\right\Vert _{X}=\mu$).
Thus, (\ref{1b}) yields%
\[
\left\langle A(w),v\right\rangle =\mu\left\langle B(w),v\right\rangle
,\quad\forall\mathrm{\;}v\in X,
\]
showing that $\left(  \mu,w\right)  $ is an eigenpair. Repeating the same
arguments we can see that any other vector at which $\mu$ is reached is also
an eigenvector corresponding to $\mu.$ In order to complete this proof we
observe that if $u\in X$ is an eigenvector corresponding to $\mu$ then $u=tw$
for some $w\in X\cap\mathbb{S}_{Y}$ such that $\left\Vert w\right\Vert
_{X}^{p}=\mu.$ Indeed, we can pick $t=\left\Vert u\right\Vert _{Y}$ and
$w=\left\Vert u\right\Vert _{Y}^{-1}u\in X\cap\mathbb{S}_{Y},$ since
\[
\left\Vert u\right\Vert _{X}^{p}=\left\langle A(u),u\right\rangle
=\mu\left\Vert u\right\Vert _{Y}^{p-q}\left\langle B(u),u\right\rangle
=\mu\left\Vert u\right\Vert _{Y}^{p}%
\]
implies that $\left\Vert w\right\Vert _{X}^{p}=\mu.$
\end{proof}

\begin{remark}
The previous proof does not require of $X$ to be uniformly convex. In fact,
reflexivity is enough. However, when $X$ is uniformly convex the minimizing
subsequence $\left\{  w_{n_{j}}\right\}  _{j\in\mathbb{N}}$ converges strongly
to $w,$ since $\left\Vert w\right\Vert _{X}=\lim_{j\rightarrow\infty
}\left\Vert w_{n_{j}}\right\Vert _{X}.$
\end{remark}

Now, let us fix an arbitrary vector $w_{0}\in\mathbb{S}_{Y}.$ Thanks to
property (\textrm{AB}), there exists $u_{1}\in X\setminus\left\{  0\right\}  $
such that%
\[
\left\langle A(u_{1}),v\right\rangle =\left\langle B(w_{0}),v\right\rangle
,\quad\forall\mathrm{\;}v\in X.
\]
Hence, by multiplying this equation by $\left\Vert u_{1}\right\Vert _{Y}%
^{1-p}$ and setting
\[
w_{1}:=\left\Vert u_{1}\right\Vert _{Y}^{-1}u_{1}\quad\mathrm{and}\quad
\lambda_{1}:=(\left\Vert u_{1}\right\Vert _{Y})^{1-p}%
\]
we obtain%
\[
\left\langle A(w_{1}),v\right\rangle =\lambda_{1}\left\langle B(w_{0}%
),v\right\rangle ,\quad\forall\mathrm{\;}v\in X.
\]

Repeating inductively the above argument we construct the iteration sequence
$\left\{  w_{n}\right\}  _{n\in\mathbb{N}}\subset X\cap\mathbb{S}_{Y}%
\ $satisfying%
\begin{equation}
\left\langle A(w_{n+1}),v\right\rangle =\lambda_{n}\left\langle B(w_{n}%
),v\right\rangle ,\quad\forall\mathrm{\;}v\in X, \label{wn}%
\end{equation}
where $\lambda_{n}=\left\Vert u_{n+1}\right\Vert _{Y}^{1-p}.$

We observe that
\begin{equation}
\lambda_{n}\geq\mu,\quad\forall\mathrm{\;}n\in\mathbb{N}. \label{mulan}%
\end{equation}
Indeed, since both $w_{n}$ and $w_{n+1}$ belong to $\mathbb{S}_{Y}$, by taking
$v=w_{n+1}$ in (\ref{wn}) and using the definition of $\mu$ we find%
\begin{equation}
\mu\leq\left\Vert w_{n+1}\right\Vert _{X}^{p}=\left\langle A(w_{n+1}%
),w_{n+1}\right\rangle =\lambda_{n}\left\langle B(w_{n}),w_{n+1}\right\rangle
\leq\lambda_{n}\left\Vert w_{n}\right\Vert _{Y}^{q-1}\left\Vert w_{n+1}%
\right\Vert _{Y}=\lambda_{n}. \label{2a}%
\end{equation}

\begin{lemma}
\label{HyndLind}The sequences $\left\{  \lambda_{n}\right\}  _{n\in\mathbb{N}%
}$ and $\left\{  \left\Vert w_{n+1}\right\Vert _{X}^{p}\right\}
_{n\in\mathbb{N}}$ are nonincreasing and converge to the same limit $\lambda.$
Moreover,
\begin{equation}
\lambda\geq\mu. \label{2}%
\end{equation}

\end{lemma}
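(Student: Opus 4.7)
The plan is to exploit inequality (\ref{2a}) in one direction and a companion inequality obtained by testing (\ref{wn}) against $v=w_n$ in the other direction, and then chain the two bounds together.

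First I would test the iteration (\ref{wn}) with $v=w_n$. Using (\ref{B2a}) on the right and (\textrm{A2}) on the left gives, since $\left\Vert w_n\right\Vert_Y = 1$,
\begin{equation*}
\lambda_n \;=\; \lambda_n\left\langle B(w_n), w_n\right\rangle \;=\; \left\langle A(w_{n+1}), w_n\right\rangle \;\leq\; \left\Vert w_{n+1}\right\Vert_X^{p-1}\left\Vert w_n\right\Vert_X.
\end{equation*}
This complements (\ref{2a}), which already yields $\left\Vert w_{n+1}\right\Vert_X^{p}\leq \lambda_n$. Substituting the latter into the former (in the form $\left\Vert w_{n+1}\right\Vert_X^{p-1}\leq \lambda_n^{(p-1)/p}$) and rearranging gives $\lambda_n^{1/p}\leq \left\Vert w_n\right\Vert_X$, that is, $\lambda_n\leq \left\Vert w_n\right\Vert_X^{p}$.

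Putting the two bounds together I obtain the key sandwich
\begin{equation*}
\left\Vert w_{n+1}\right\Vert_X^{p} \;\leq\; \lambda_n \;\leq\; \left\Vert w_n\right\Vert_X^{p},\qquad n\in\mathbb{N}.
\end{equation*}
From this, $\{\left\Vert w_{n+1}\right\Vert_X^{p}\}_{n\in\mathbb{N}}$ is immediately nonincreasing; and shifting the first inequality by one index, $\lambda_{n+1}\leq \left\Vert w_{n+1}\right\Vert_X^{p}\leq \lambda_n$, shows $\{\lambda_n\}_{n\in\mathbb{N}}$ is nonincreasing as well. The lower bound $\lambda_n\geq \mu$ is already (\ref{mulan}), and $\left\Vert w_{n+1}\right\Vert_X^{p}\geq \mu$ holds because $w_{n+1}\in X\cap\mathbb{S}_Y$. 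Hence both sequences are monotone and bounded below, so they converge; denote their limits by $\lambda$ and $L$.

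Finally, passing to the limit in the sandwich and using that $\left\Vert w_n\right\Vert_X^{p}$ and $\left\Vert w_{n+1}\right\Vert_X^{p}$ share the same limit $L$, the squeeze theorem forces $L=\lambda$, and the bound (\ref{2}) is inherited from (\ref{mulan}). The only nontrivial step is the second one — recognising that testing (\ref{wn}) against $w_n$ and invoking (\textrm{A2}) produces precisely the reverse-direction estimate needed to pin $\lambda_n$ between $\left\Vert w_{n+1}\right\Vert_X^{p}$ and $\left\Vert w_n\right\Vert_X^{p}$; everything else is bookkeeping.
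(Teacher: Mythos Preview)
Your proof is correct and follows essentially the same approach as the paper: both test (\ref{wn}) with $v=w_{n+1}$ (giving $\left\Vert w_{n+1}\right\Vert_X^{p}\leq\lambda_n$, already in (\ref{2a})) and with $v=w_n$ (giving $\lambda_n\leq\left\Vert w_{n+1}\right\Vert_X^{p-1}\left\Vert w_n\right\Vert_X$), then chain the two estimates to deduce monotonicity and a common limit. The only cosmetic difference is that you repackage the chain as the clean sandwich $\left\Vert w_{n+1}\right\Vert_X^{p}\leq\lambda_n\leq\left\Vert w_n\right\Vert_X^{p}$, whereas the paper writes it as $\left\Vert w_{n+1}\right\Vert_X^{p}\leq\lambda_n\leq\left\Vert w_{n+1}\right\Vert_X^{p-1}\left\Vert w_n\right\Vert_X\leq(\lambda_n)^{(p-1)/p}(\lambda_{n-1})^{1/p}$ and reads off the two monotonicities from there.
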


\begin{proof}
We can see from (\ref{2a}) that
\[
\left\Vert w_{n+1}\right\Vert _{X}^{p}\leq\lambda_{n}\quad\forall
\mathrm{\;}n\in\mathbb{N}.
\]
Taking $v=w_{n}$ in (\ref{wn}) we have%
\[
\lambda_{n}=\lambda_{n}\left\Vert w_{n}\right\Vert _{Y}^{q}=\lambda
_{n}\left\langle B(w_{n}),w_{n}\right\rangle =\left\langle A(w_{n+1}%
),w_{n}\right\rangle \leq\left\Vert w_{n+1}\right\Vert _{X}^{p-1}\left\Vert
w_{n}\right\Vert _{X}.
\]
Hence,%
\begin{equation}
\left\Vert w_{n+1}\right\Vert _{X}^{p}\leq\lambda_{n}\leq\left\Vert
w_{n+1}\right\Vert _{X}^{p-1}\left\Vert w_{n}\right\Vert _{X}\leq(\lambda
_{n})^{\frac{p-1}{p}}(\lambda_{n-1})^{\frac{1}{p}}, \label{1}%
\end{equation}
from which we obtain%
\[
\left\Vert w_{n+1}\right\Vert _{X}\leq\left\Vert w_{n}\right\Vert _{X}%
\quad\mathrm{and}\quad\lambda_{n}\leq\lambda_{n-1}.
\]
Since the numerical sequences $\left\{  \lambda_{n}\right\}  _{n\in\mathbb{N}%
}$ and $\left\{  \left\Vert w_{n+1}\right\Vert _{X}^{p}\right\}
_{n\in\mathbb{N}}$ are also bounded from below they are convergent. Thus, by
making $n\rightarrow\infty$ in (\ref{1}) we can see that both converge to the
same limit, which we denote by $\lambda.$ The inequality (\ref{2}) follows
directly from (\ref{mulan}).
\end{proof}

\begin{lemma}
\label{main}There exist a subsequence $\left\{  n_{j}\right\}  _{j\in N}$ and
a vector $w\in X\cap\mathbb{S}_{Y}$ such that $w_{n_{j}}\rightarrow w$ in $X.$
Moreover, $\left(  \lambda,w\right)  $ is an eigenpair of (\ref{abst}) and
$w_{n_{j}+1}\rightarrow w$ in $X.$
\end{lemma}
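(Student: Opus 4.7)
The plan is to exploit compactness to extract weakly convergent subsequences of both $\{w_{n_j}\}$ and $\{w_{n_j+1}\}$, promote them to strong $X$-convergence via uniform convexity once we identify the common limit norm as $\lambda^{1/p}$, use the strict equality clause in (A2) to show that the two successive iterates have the same strong limit, and finally pass to the limit in (\ref{wn}). By Lemma~\ref{HyndLind}, $\{\|w_n\|_X\}$ is bounded. Using reflexivity of $X$ together with the compact embedding $X\hookrightarrow Y$, a nested extraction yields a subsequence, still denoted $\{n_j\}$, along which $w_{n_j}\rightharpoonup w$ and $w_{n_j+1}\rightharpoonup z$ in $X$, with strong convergence to the same limits in $Y$. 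Continuity of $\|\cdot\|_Y$ gives $w,z\in X\cap\mathbb{S}_Y$.

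To identify $\|w\|_X$ and $\|z\|_X$, I would test (\ref{wn}) at step $n_j$ with $v=w$, and at step $n_j+1$ with $v=z$. The right-hand sides $\lambda_{n_j}\langle B(w_{n_j}),w\rangle$ and $\lambda_{n_j+1}\langle B(w_{n_j+1}),z\rangle$ converge, by continuity of $B$ and (\ref{B2a}), to $\lambda\langle B(w),w\rangle=\lambda$ and $\lambda\langle B(z),z\rangle=\lambda$. The left-hand sides are bounded above via (A2) by $\|w_{n_j+1}\|_X^{p-1}\|w\|_X$ and $\|w_{n_j+2}\|_X^{p-1}\|z\|_X$; since $\|w_{n+1}\|_X^p\to\lambda$ by Lemma~\ref{HyndLind}, these tend to $\lambda^{(p-1)/p}\|w\|_X$ and $\lambda^{(p-1)/p}\|z\|_X$. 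Combined with the weak lower semicontinuity bounds $\|w\|_X,\|z\|_X\le\lambda^{1/p}$, this forces $\|w\|_X^p=\|z\|_X^p=\lambda$. Uniform convexity of $X$ then upgrades weak to strong convergence: $w_{n_j}\to w$ and $w_{n_j+1}\to z$ in $X$.

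The main obstacle is to show $z=w$. With the strong $X$-convergence of $w_{n_j+1}$ and continuity of $A$, the identity $\langle A(w_{n_j+1}),w\rangle=\lambda_{n_j}\langle B(w_{n_j}),w\rangle$ passes to the limit and yields $\langle A(z),w\rangle=\lambda=\|z\|_X^{p-1}\|w\|_X$. This is precisely the equality case of (A2); since $z\ne 0\ne w$, it forces $z=tw$ for some $t>0$, and then $\|z\|_Y=\|w\|_Y=1$ gives $t=1$, so $z=w$. This step is the abstract analogue of the H\"older equality case flagged in the introduction, and without it one cannot rule out that successive iterates cluster at distinct limits.

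With $z=w$ in hand, I pass to the limit in (\ref{wn}) at step $n_j$ for arbitrary $v\in X$: continuity of $A$ combined with $w_{n_j+1}\to w$ in $X$, and continuity of $B$ combined with $w_{n_j}\to w$ in $Y$, yield $\langle A(w),v\rangle=\lambda\langle B(w),v\rangle$ for all $v\in X$. Since $\|w\|_Y=1$, this is exactly the eigenpair equation (\ref{absol}), completing the proof.
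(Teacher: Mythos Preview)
Your proof is correct and follows essentially the same route as the paper's: extract weak limits by reflexivity and compactness, pin down the $X$-norm of the limits as $\lambda^{1/p}$ by testing (\ref{wn}) against the limit vectors and combining (A2) with weak lower semicontinuity, upgrade to strong convergence via uniform convexity, and then invoke the equality case of (A2) to identify the limits of $\{w_{n_j}\}$ and $\{w_{n_j+1}\}$. The only organizational difference is that you perform a single nested extraction at the outset so that both $\{w_{n_j}\}$ and $\{w_{n_j+1}\}$ converge along the same index set, whereas the paper first treats $\{w_{n_j}\}$, then passes to a further sub-subsequence of $\{w_{n_j+1}\}$, and closes with a cluster-point argument to recover convergence of the full shifted subsequence; your variant is slightly more economical in this respect.
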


\begin{proof}
Since $\left\Vert w_{n}\right\Vert _{X}^{p}\rightarrow\lambda,$ the
compactness of the immersion $X\hookrightarrow Y$ guarantees the existence of
a subsequence $\left\{  w_{n_{j}}\right\}  $ and an element $w\in X$ such that%
\[
w_{n_{j}}\rightharpoonup w\;\mathrm{(weakly)}\;\mathrm{in}\;X,\quad w_{n_{j}%
}\rightarrow w\;\mathrm{(strongly)}\;\mathrm{in}\;Y
\]
and%
\begin{equation}
\left\Vert w\right\Vert _{X}^{p}\leq\lim_{j\rightarrow\infty}\left\Vert
w_{n_{j}}\right\Vert _{X}^{p}=\lambda. \label{3}%
\end{equation}
We also have
\[
\lambda_{n_{j}}\left\langle B(w_{n_{j}}),w\right\rangle =\left\langle
Aw_{n_{j}+1},w\right\rangle \leq\left\Vert w_{n_{j}+1}\right\Vert _{X}%
^{p-1}\left\Vert w\right\Vert _{X}\leq(\lambda_{n_{j}})^{\frac{p-1}{p}%
}\left\Vert w\right\Vert _{X},
\]
so that
\[
\left\langle B(w_{n_{j}}),w\right\rangle \leq(\lambda_{n_{j}})^{-\frac{1}{p}%
}\left\Vert w\right\Vert _{X}\leq\lambda^{-\frac{1}{p}}\left\Vert w\right\Vert
_{X}%
\]
The strong convergence $w_{n_{j}}\rightarrow w$ in $Y$ implies that
$w\in\mathbb{S}_{Y}$ and then the continuity of $B$ yields%
\[
1=\left\Vert w\right\Vert _{Y}^{q}=\left\langle B(w),w\right\rangle
=\lim_{j\rightarrow\infty}\left\langle B(w_{n_{j}}),w\right\rangle \leq
\lambda^{-\frac{1}{p}}\left\Vert w\right\Vert _{X},
\]
so that
\[
\lambda\leq\left\Vert w\right\Vert _{X}^{p}.
\]
This inequality, in view of (\ref{3}), implies that $\lim_{j\rightarrow\infty
}\left\Vert w_{n_{j}}\right\Vert _{X}^{p}=\left\Vert w\right\Vert _{X}%
^{p}=\lambda.$ Hence, the uniform convexity of $X$ allows us to conclude that
$w_{n_{j}}\rightarrow w$ in $X.$ Applying the same arguments to the sequence
$\left\{  w_{n_{j}+1}\right\}  _{j\in\mathbb{N}},$ we can assume that there
exist a subsequence $\left\{  w_{n_{j_{k}}+1}\right\}  _{k\in\mathbb{N}}$ and
a point $\widetilde{w}\in X\cap\mathbb{S}_{Y}$ such that
\[
\left\Vert \widetilde{w}\right\Vert _{X}^{p}=\lambda\quad\mathrm{and}\quad
w_{n_{j_{k}}+1}\rightarrow\widetilde{w}\;\mathrm{in}\;X.
\]
Since $A$ and $B$ are continuous, we can pass to the limit in%
\[
\left\langle A(w_{n_{j_{k}}+1}),v\right\rangle =\lambda_{n_{j_{k}}%
}\left\langle B(w_{n_{j_{k}}}),v\right\rangle ,\quad v\in X,
\]
in order to obtain%
\[
\left\langle A(\widetilde{w}),v\right\rangle =\lambda\left\langle
B(w),v\right\rangle .
\]
This yields%
\[
\lambda=\lambda\left\Vert w\right\Vert _{Y}^{q}=\lambda\left\langle
B(w),w\right\rangle =\left\langle A(\widetilde{w}),w\right\rangle
\leq\left\Vert \widetilde{w}\right\Vert _{X}^{p-1}\left\Vert w\right\Vert
_{X}=\lambda^{\frac{p-1}{p}}\lambda^{\frac{1}{p}}=\lambda,
\]
showing thus that%
\[
\left\langle A(\widetilde{w}),w\right\rangle =\left\Vert \widetilde
{w}\right\Vert _{X}^{p-1}\left\Vert w\right\Vert _{X}.
\]
Since $\left\Vert \widetilde{w}\right\Vert _{X}=\left\Vert w\right\Vert _{X}$
($=\lambda$), our hypothesis \textrm{(A2)} implies that $\widetilde{w}=w,$ so
that%
\[
\left\langle A(w),v\right\rangle =\lambda\left\langle B(w),v\right\rangle
,\quad\forall\mathrm{\;}v\in X.
\]
This shows both that $\lambda$ is an eigenvalue and that $w$ is a
corresponding eigenvector.

Note that our arguments show that $w$ is the only cluster point of the
subsequence $\left\{  w_{n_{j}+1}\right\}  _{j\in\mathbb{N}}.$ This fact
implies that $\left\{  w_{n_{j}+1}\right\}  $ also converges to $w$ as claimed
in the statement of the lemma. Actually, for any $m\in\mathbb{N}$ the shifted
subsequence $\left\{  w_{n_{j}+m}\right\}  _{j\in\mathbb{N}}$ converges to
$w.$ \bigskip
\end{proof}

\begin{proof}
[Proof of Theorem \ref{maintheo}]It follows from Lemma \ref{HyndLind} and
Lemma \ref{main}.
\end{proof}

\begin{remark}
\label{obs}When we know in advance that $\lambda$ is simple, in the sense that
its corresponding eigenvectors are scalar multiple of each other, we have that
$w$ and $-w$ are the only cluster points of the sequence $\left\{
w_{n}\right\}  _{n\in\mathbb{N}}.$
\end{remark}

Regarding the eigenvalue problem (\ref{pq}), when $q\in\lbrack1,p]$ and
$w_{0}\in L^{q}(\Omega)\setminus\left\{  0\right\}  $ is nonnegative, one has%
\[
w_{n}:=\frac{u_{n}}{\left\Vert u_{n}\right\Vert _{q}}\rightarrow e_{q}%
\quad\mathrm{in}\quad W_{0}^{1,p}(\Omega)\quad\mathrm{and}\quad\lambda
_{n}:=\left(  \frac{\left\Vert u_{n}\right\Vert _{Y}}{\left\Vert
u_{n+1}\right\Vert _{Y}}\right)  ^{p-1}\rightarrow\lambda_{q},
\]
where $e_{q}$ denotes the positive first eigenfunction such that $\left\Vert
e_{q}\right\Vert _{q}=1$ and $\lambda_{q}$ is the first eigenvalue for
(\ref{pq}), defined by (\ref{lamdaq}). Indeed, as mentioned in the
Introduction, when $q\in\lbrack1,p]$ the eigenvalue $\lambda_{q}$ is simple
and its eigenfunctions are the only that do not change sign in $\Omega.$
Hence, since $w_{0}\geq0$ a simple comparison principle guarantees that
$w_{n}\geq0$ for all $n\in\mathbb{N}$, the same occurring with the limit
function $w$ given by Theorem \ref{maintheo}. Since $w$ is a nonnegative
eigenfunction corresponding to the eigenvalue $\lambda,$ it must be strictly
positive in $\Omega,$ according to the strong maximum principle (see
\cite{V}). This fact implies that $\lambda=\lambda_{q}$ and then Remark
\ref{obs} guarantees that $w_{n}\rightarrow e_{q}$ in $W_{0}^{1,p}(\Omega).$


\section{Two concrete examples\label{sec2}}

In this section we present two concrete examples of eigenvalue-type problems
for which the results in the previous section apply. In both, $\Omega$ denotes
a smooth bounded domain of $\mathbb{R}^{N},$ $N\geq2.$ We anticipate that when
$1\leq q\leq p$ in both examples the first eigenvalue is simple and its
eigenfunctions are the only that do not change sign. Thus, in this situation,
it follows from the Strong Maximum Principle that the choice of an initial
function $w_{0}$ nonnegative forces $\left\{  \lambda_{n}\right\}  $ to
converge to the first eigenvalue and $\left\{  w_{n}\right\}  $ to converge to
the only positive and normalized first eigenfunction.

\subsection{Dirichlet eigenproblem for the $s$-fractional $p$-Laplacian:}

The results of Section \ref{sec1} can be applied to the following fractional
version of (\ref{pq})%
\begin{equation}
\left\{
\begin{array}
[c]{lll}%
\left(  -\Delta_{p}\right)  ^{s}u=\lambda\left\Vert u\right\Vert _{q}%
^{p-q}\left\vert u\right\vert ^{q-2}u & \mathrm{in} & \Omega\\
u=0 & \mathrm{on} & \partial\Omega,
\end{array}
\right.  \label{fractional}%
\end{equation}
where $0<s<1<p,$ $\left\Vert \cdot\right\Vert _{q}$ denotes the standard norm
of $L^{q}(\Omega),$
\[
1\leq q<p_{s}^{\star}:=\left\{
\begin{array}
[c]{lll}%
\frac{Np}{N-ps} & \mathrm{if} & sp<N\\
\infty & \mathrm{if} & sp\geq N
\end{array}
\right.
\]
and
\[
\left(  -\Delta_{p}\right)  ^{s}u:=2\lim_{\epsilon\rightarrow0^{+}}%
\int_{\left\vert x\right\vert \geq\epsilon}\frac{|u(x)-u(y)|^{p-2}%
(u(x)-u(y))}{|x-y|^{N+sp}}\mathrm{d}y,
\]
is the $s$-fractional $p$-Laplacian.

The usual space to deal with this problem is the fractional Sobolev space
$W_{0}^{s,p}(\Omega)$ defined as the closure of $C_{c}(\Omega)$ with respect
to the Gagliardo seminorm $\left[  \cdot\right]  _{s,p}$ in $\mathbb{R}^{N},$
whose expression, at a measurable function $u$ of $\mathbb{R}^{N},$ is%
\[
\left[  u\right]  _{s,p}:=\left(  \int_{\mathbb{R}^{N}}\int_{\mathbb{R}^{N}%
}\frac{|u(x)-u(y)|^{p}}{|x-y|^{N+sp}}\mathrm{d}x\mathrm{d}y\right)  ^{\frac
{1}{p}}.
\]

Thanks the fractional Poincar\'{e} inequality (see \cite[Lemma 2.4]{BLP}) the
Gagliardo seminorm $\left[  \cdot\right]  _{s,p}$ is really a norm in
$W_{0}^{s,p}(\Omega).$

It is well-known that $W_{0}^{s,p}(\Omega),$ endowed with the norm $\left[
\cdot\right]  _{s,p},$ is a Banach space uniformly convex. Moreover,
$W_{0}^{s,p}(\Omega)$ is compactly embedded into $L^{r}(\Omega),$ for all
$1\leq r<p_{s}^{\star}.$

The weak formulation of (\ref{fractional}) is $A(u)=\lambda\left\Vert
u\right\Vert _{q}^{p-q}B(u)$ where $A:W_{0}^{s,p}(\Omega)\rightarrow
W_{0}^{-s,p^{\prime}}(\Omega)$ is defined as%
\begin{equation}
\left\langle A(u),v\right\rangle :=\int_{\mathbb{R}^{N}}\int_{\mathbb{R}^{N}%
}\frac{\left\vert u(x)-u(y)\right\vert ^{p-2}\left(  u(x)-u(y)\right)  \left(
v(x)-v(y)\right)  }{\left\vert x-y\right\vert ^{N+ps}}\mathrm{d}x\mathrm{d}y,
\label{Af}%
\end{equation}
and $B:L^{q}(\Omega)\rightarrow L^{q\prime}(\Omega)$ is the map%
\begin{equation}
\left\langle B(u),v\right\rangle ={\int_{\Omega}}\left\vert u\right\vert
^{q-2}uv\mathrm{d}x. \label{Bf}%
\end{equation}

Therefore, by considering these maps, (\ref{fractional}) takes the form
(\ref{absol}) with $X=W_{0}^{s,p}(\Omega)$ and $Y=L^{q}(\Omega).$ It can be
shown that the functions (\ref{Af}) and (\ref{Bf}) satisfy the hypotheses
(\textrm{A1}), (\textrm{A2}), (\textrm{B1}), (\textrm{B2}) and (\textrm{AB}).
The proof of these claims as well as all of that made in this section on the
fractional Sobolev space $W_{0}^{s,p}(\Omega)$ and the operator $\left(
-\Delta_{p}\right)  ^{s}$ can be found in the papers \cite{BLP, BF,Guide,
ILPS, Erilind}.

\subsection{Steklov eigenproblem for the $p$-Laplacian}

Let us consider the following Steklov-type eigenvalue problem
\begin{equation}
\left\{
\begin{array}
[c]{lll}%
-\Delta_{p}u+\left\vert u\right\vert ^{p-2}u=0 & \mathrm{in} & \Omega\\
\left\vert \nabla u\right\vert ^{p-2}\frac{\partial u}{\partial\nu}%
=\lambda\left\vert u\right\vert _{q}^{p-q}\left\vert u\right\vert ^{q-2}u &
\mathrm{on} & \partial\Omega,
\end{array}
\right.  \label{stek}%
\end{equation}
where $\frac{\partial}{\partial\nu}$ denotes the outer unit normal derivative
along $\partial\Omega$ and
\[
\left\vert u\right\vert _{q}:=\left(
{\displaystyle\int_{\partial\Omega}}
\left\vert u\right\vert ^{q}\mathrm{d}s\right)  ^{\frac{1}{q}}%
\]
denotes the standard norm of the Banach space $L^{q}(\partial\Omega).$

The appropriate space of solutions for (\ref{stek}) is the uniformly convex
Sobolev space
\[
W^{1,p}(\Omega):=\left\{  u\in L^{p}(\Omega):\nabla u\in L^{p}(\Omega
)^{N}\right\}
\]
endowed with the norm%
\[
\left\Vert u\right\Vert _{W^{1,p}}:=\left(  \left\Vert \nabla u\right\Vert
_{p}^{p}+\left\Vert u\right\Vert _{p}^{p}\right)  ^{\frac{1}{p}}.
\]

The embedding $W^{1,p}(\Omega)\hookrightarrow L^{q}(\partial\Omega)$ is known
as the \textit{boundary trace operator} and associates $u\in W^{1,p}(\Omega)$
with its trace $\left.  u\right\vert _{\partial u}\in L^{q}(\Omega),$ which we
denote here by $u$ itself. This operator is compact if
\[
1\leq q<p_{\ast}:=\left\{
\begin{array}
[c]{lll}%
\frac{p(N-1)}{N-p} & \mathrm{if} & p<N\\
\infty & \mathrm{if} & p\geq N
\end{array}
\right.
\]
and just continuous when $q=p_{\star}$ (see \cite{BFR, Rossi}).

We say that $u\in W^{1,p}(\Omega)$ is a weak solution of (\ref{stek}), for
some $\lambda\in\mathbb{R},$ if and only if,%
\[
\int_{\Omega}\left(  \left\vert \nabla u\right\vert ^{p-2}\nabla u\cdot\nabla
v+\left\vert u\right\vert ^{p-2}uv\right)  \mathrm{d}x=\lambda\left\vert
u\right\vert _{q}^{p-q}{\int_{\partial\Omega}}\left\vert u\right\vert
^{q-2}uv\mathrm{d}s,\quad\forall\mathrm{\;}v\in W^{1,p}(\Omega).
\]

Thus, (\ref{stek}) takes the form (\ref{absol}) with $X:=(W^{1,p}%
(\Omega),\left\Vert u\right\Vert _{W_{0}^{1,p}}),$ $Y:=(L^{q}(\partial
\Omega),\left\vert u\right\vert _{q})$ and the maps $A:X\rightarrow X^{\star}$
and $B:Y\rightarrow Y^{\star}$ defined by%
\[
\left\langle A(u),v\right\rangle :=\int_{\Omega}\left\vert \nabla u\right\vert
^{p-2}\nabla u\cdot\nabla v\mathrm{d}x+\int_{\Omega}\left\vert u\right\vert
^{p-2}uv\mathrm{d}x,\quad\forall\mathrm{\;}v\in W^{1,p}(\Omega)
\]
and%
\[
\left\langle B(u),v\right\rangle :={\int_{\partial\Omega}}\left\vert
u\right\vert ^{q-2}uv\mathrm{d}s,\quad\forall\mathrm{\;}v\in L^{q}%
(\partial\Omega).
\]

It is straightforward to check (see \cite{Le}) that $A$ and $B$ are continuous
and satisfy the hypotheses $\mathrm{(A1)}$, $\mathrm{(A2)}$, $\mathrm{(B1),}$
$\mathrm{(B2)}$ and $\mathrm{(AB)}$.

For more details on the eigenvalue problem (\ref{stek}) we refer the reader to
\cite{BR} (see also \cite{Auchmuty} where properties and applications
regarding the case $p=q=2$ are provided).

\section{acknowledgements}

The author was supported by CNPq/Brazil (483970/2013-1 and 306590/2014-0) and
Fapemig/Brazil (CEX APQ 03372/16).

\end{document}